\DeclareMathOperator{\expectation}{\mathbb{E}}
\newcommand{\E}[1]{\expectation \sbr{#1}}
\DeclareMathOperator{\variance}{Var}
\newcommand{\var}[1]{\variance \sbr{#1}}
\newcommand{\volumefrontfactor}{\frac{\pi^{\frac{n}{2}}}{\Gamma \del{\frac{n}{2} + 1}}}
\newcommand{\surfaceareafrontfactor}{\frac{n\pi^{\frac{n}{2}}}{\Gamma \del{\frac{n}{2} + 1}}}
\newcommand{\error}{\tilde{\varepsilon}}
\newcommand{\bigo}[1]{\ensuremath{\mathcal{O}\del{#1}}}
\newcommand{\sphereset}{\ensuremath{S^{n-1}}}
\newcommand{\uniformdist}{$\mathrm{uniform}(0,1)$ }
\newcommand{\betadist}{$\mathrm{beta}(\alpha=2,\beta=2)$  }
\title{An algorithm for estimating volumes and other integrals in $\lowercase{n}$ dimensions}
\author{Arun I. \and Murugesan Venkatapathi \thanks{Department of Computational and Data
    Sciences, Indian Institute of Science, Bengaluru - 560012
    (\email{murugesh@iisc.ac.in})}}
\newcommand{\bodyrtol}{0.1}
\newcommand{\cubeedge}{1.0}
\newcommand{\ellipsoidaxesstart}{0.5}
\newcommand{\ellipsoidaxesstop}{1.0}
\newcommand{\cubeoffsetmaximumpercent}{6.25}
\newcommand{\ellipsoidoffsetmaximumpercent}{10}
\newcommand{\offsettrials}{10}
\newcommand{\distributiontrials}{100}
\newcommand{\integraltrials}{1000}
\begin{document}

\maketitle

\begin{abstract}
  The computational cost in evaluation of the volume of a body using
  numerical integration grows exponentially with dimension of the
  space $n$. The most generally applicable algorithms for
  estimating $n$-volumes and integrals are based on Markov Chain Monte
  Carlo (MCMC) methods, and they are suited for convex domains. We analyze a less known alternate method used for estimating $n$-dimensional
  volumes, that is agnostic to the convexity and roughness of the body. It results due to the possible decomposition of an arbitrary $n$-volume into an integral of statistically weighted
  volumes of $n$-spheres. We establish its dimensional scaling, and extend it for evaluation of arbitrary integrals over non-convex domains. Our results also show that this method is significantly more efficient than the MCMC approach even when restricted to convex domains, for $n$ $\lesssim$ 100. An importance sampling may extend this advantage to larger dimensions.
\end{abstract}

\section{Introduction}

Analytic evaluation of volumes is feasible for a relatively small set
of symmetric bodies defined in the appropriate coordinate systems. In
some cases, the surface of a body may not have a tractable closed form
analytical expression and the body may only be defined by a set of
inequalities. These challenges in analytical integration were overcome
by numerical methods \cite{keshavarzzadeh2018numerical}. As the
dimension of problems became large, the exponential increase in the
cost of numerical methods (NP-hardness) inspired new statistical
methods that converge to a reasonable estimate of the volume in
polynomial time under certain constraints
\cite{allgower1986volume,speevak1986volume,lawrence1991volume}. In
evaluating more general integrals, deterministic sampling methods such
as the Quasi-Monte Carlo are very efficient when the integrand can be
reduced to a function of a single effective variable
\cite{dick2007qmc}. Similarly, the naive Monte Carlo method is largely
effective when the limits of the integration are constants, that is,
over a domain which is an $n$-orthotope (a rectangle when $n=2$,
cuboid when $n=3$ etc.). In problems where some function defines the
boundary of the domain or its membership, and in problems where the
sampled independent variables have an implicit non-uniform probability
density, correctly sampling the domain in itself amounts to be
NP-hard.

Even for the diminished problem of estimating
$n$-volumes, a Markov Chain Monte Carlo (MCMC) sampling is the only
tractable approach for large $n$ \cite{simonovits2003compute}. This
approach is geometrically insightful and involves cancellation of
errors in the estimates, resulting in relatively fast convergence for
convex volumes. Nevertheless, after improving rapidly from
$\bigo{n^{23}}$ scaling in the samples required
\cite{dyer1988complexity,dyer1991computing,kannan1997random},
algorithms using this approach have stagnated at $\bigo{n^4}$ samples
for a given convex shape
\cite{lovasz2003simulated,jaekel2011monte,ge2015fast}. Since the cost
of evaluating a typical scalar function increases linearly with the
number of cardinal directions $n$, the total computing effort in
estimating volumes scales as $\bigo{n^5}$ for these MCMC methods. This
general poor scaling of the MCMC approach with the dimension is
overcome using specialized algorithms designed for certain forward and
inverse problems \cite{chen2016scaling, constantine2016scaling,
  feng2018scaling, vollmer2015scaling}. Volumes of non-convex bodies can also be evaluated more accurately using semi-definite programming, but they are suited for smaller dimensions \cite{henrion2009approximate}.

The algorithm presented here is suitable for estimating volumes of both
convex and non-convex bodies with fewer exceptions, and for other
problems of estimation in continuous spaces. This method also retains
the advantages of the naive Monte Carlo sampling such as the full
independence of the random samples. The resulting suitability for
parallel computation could be of additional significance. The proposed
$n$-sphere-Monte-Carlo (NSMC) method decomposes the estimated volume
into weighted volumes of $n$-spheres, and these weights are trivially
estimated by sampling extents of the domain with respect to an
origin. Such a volume preserving transformation was suggested many years ago \cite{fok1989volume}. We also show a straightforward adaptation of this method to
estimate arbitrary integrals. Here, the required number of extent
samples scale as $\bigo{n}$ for a fixed distribution of extents of the
domain, with the corresponding total computing effort scaling as
$\bigo{n^2}$ for estimating volumes and as $\bigo{n^3}$ for estimating
arbitrary integrals. While estimating volumes using this approach involves only sampling the extents, estimating arbitrary integrals includes sampling the interior of the domain. The proposed approach may
have challenges in estimating volumes which are not just highly
eccentric but also have a tailed distribution of large extents, such as certain convex $shapes$. In
such cases, the poor scaling in number
of samples with $n$ can be reduced by an appropriate importance
sampling to capture the tailed extents. The challenges in such sampling of high dimensional
sub-spaces along with a potential solution has been described
elsewhere \cite{arun2021thesis, arun2021cones}. In this paper, we limit ourselves to the naive NSMC
approach using an unbiased sampling of the extents. The naive algorithm is significantly more efficient than the MCMC approach even when restricted to convex domains, for $n$ $\lesssim$ 100.

\section{Frequently used terms and symbols}

\begin{description}
\item[\sphereset] is the set of all points on the surface of the unit
  sphere.
\item[membership function] It is a
  function $\mathbb{R}^n \mapsto \cbr{0, 1}$ that maps a point in
  space, to 0 if that point lies outside the body, or 1 if that point
  lies inside the body.
\item[extent] The extent of a body is the distance
  between the origin of the coordinate system and a point on the
  surface of the body. If $r$ is the extent of a
  body along the direction vector $\hat{s}$, then $r \hat{s}$ lies on
  the surface of the body.
\item[extent function] The extent function of a body is a function
  $\sphereset \mapsto \mathbb{R}$ that maps a direction vector to a
  corresponding extent of the body.
\item[extent density] The extent density of a body is the probability
  density function of extents obtained when direction vectors are
  randomly sampled from a uniform distribution on $\sphereset$.
\item[$\mathbf{s_n}$] is the surface area of the $n$ dimensional unit
  sphere given by
  \begin{equation}
    s_n = \surfaceareafrontfactor
  \end{equation}
\item[$\mathbf{v_n}$] is the volume of the $n$ dimensional unit sphere
  given by
  \begin{equation}
    v_n = \frac{s_n}{n} = \volumefrontfactor
  \end{equation}
\end{description}

\section{Estimation of volume}
\subsection{Problem statement}

Given a closed body containing the origin, specified by an extent
function $S$ with an extent density function $f_R$, estimate the volume
enclosed by the body.

We assume $S$ to be single valued for clarity of the paper but it need
not be continuous. The constraint of $S$ being single valued leaves
out some non-convex geometries such as in
\cref{fig:multi-valued}. This constraint can be relaxed by a simple
generalization of the extent of such a body as shown in
\cref{sec:multi-valued-extent-function}. Also, in many cases, the
extent function may not be available explicitly and only a membership
function may be available. In such cases, we can construct an extent
function that estimates the extent in the given direction by
repeatedly invoking the membership function for points along that
direction, say using a bisection search.

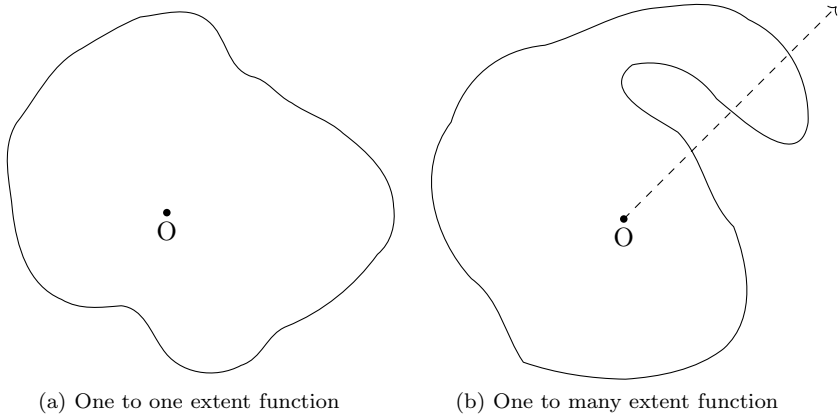
\begin{figure}
  \centering
  \subfloat[One to one extent function] {
    \label{fig:single-valued}
    \begin{tikzpicture}[scale=0.5]
      \draw (-2.7967342,-2.3106736)
      .. controls (-3.7043739,-1.9346336) and (-3.9904398,-0.87996361) .. (-4.0992726,0.00705339)
      .. controls (-4.1636362,0.79238339) and (-4.4375277,1.6640734) .. (-3.994924,2.3894634)
      .. controls (-3.4512236,3.0727934) and (-3.0812982,3.9468534) .. (-2.2660806,4.3646434)
      .. controls (-1.7634807,4.6707534) and (-1.250036,4.9878634) .. (-0.7071386,5.2049534)
      .. controls (-0.1135082,5.2717734) and (0.5852213,5.5301534) .. (1.1007503,5.0914834)
      .. controls (1.6043903,4.6953334) and (1.5825563,3.8394334) .. (2.2514523,3.6218834)
      .. controls (2.6797313,3.5469634) and (2.9295208,3.1081434) .. (3.3196748,2.9210134)
      .. controls (3.7572768,2.6129334) and (4.2916648,2.4781834) .. (4.6873848,2.1087234)
      .. controls (5.3158958,1.6291134) and (5.9969958,1.0138734) .. (6.0284188,0.16540339)
      .. controls (6.0956498,-0.29785661) and (5.9710468,-0.80214361) .. (5.6006978,-1.1088136)
      .. controls (4.9872868,-1.9550536) and (4.1515928,-2.6386536) .. (3.1797558,-3.0291936)
      .. controls (2.6520643,-3.2376136) and (2.5369843,-3.8908536) .. (1.9763153,-4.0630036)
      .. controls (1.2860483,-4.4381036) and (0.3109613,-4.2998436) .. (-0.1252936,-3.6072936)
      .. controls (-0.4264775,-3.1922636) and (-0.622766,-2.5394636) .. (-1.214215,-2.4780936)
      .. controls (-1.7438697,-2.5217436) and (-2.3149993,-2.6011236) .. (-2.7967342,-2.3106736);
      \fill [black] (0,0) circle [radius=0.1];
      \node [below] at (0,0) {O};
    \end{tikzpicture}
  }
  \subfloat[One to many extent function] {
    \label{fig:multi-valued}
    \begin{tikzpicture}[scale=0.5]
      \draw (0.78242456,5.6090091)
      .. controls (1.6379134,5.6871563) and (2.5727205,5.8975436) .. (3.3137353,5.3062475)
      .. controls (4.3806108,4.8214915) and (4.9431878,3.7082418) .. (4.9005645,2.5685865)
      .. controls (4.6758216,1.1507455) and (3.0289442,2.7430381) .. (2.4646791,3.2019168)
      .. controls (1.9496533,3.9246509) and (1.1027784,4.295187) .. (0.22643958,4.1007947)
      .. controls (-0.7008227,3.3836869) and (0.87916567,2.6809839) .. (1.4371031,2.3093844)
      .. controls (2.1392545,1.5890409) and (2.204946,0.49910621) .. (2.9210374,-0.20450456)
      .. controls (3.3019404,-1.2236986) and (3.5663871,-2.6237937) .. (2.6563396,-3.4498146)
      .. controls (1.9300553,-4.0170542) and (0.95034707,-4.1980002) .. (0.04536859,-4.2628156)
      .. controls (-0.87775328,-4.2466093) and (-1.7936578,-4.1123965) .. (-2.670961,-3.8104695)
      .. controls (-3.1644689,-3.087733) and (-3.2896594,-2.150226) .. (-4.0504705,-1.5880672)
      .. controls (-5.0569542,-0.49124193) and (-5.5643706,1.2978843) .. (-4.5901254,2.5788485)
      .. controls (-4.2356465,3.7504988) and (-3.3117539,4.5117254) .. (-2.0968061,4.6235095)
      .. controls (-1.1129049,4.8975855) and (-0.24345343,5.4894729) .. (0.78242456,5.6090091);
      \fill [black] (0,0) circle [radius=0.1];
      \node [below] at (0,0) {O};
      \draw [dashed,->] (0,0) -- (45:8);
    \end{tikzpicture}
  }
  \caption{Two bodies --- the first with a unique extent in every
    direction, i.e., a one to one extent function and the second with
    multiple extents in some directions, i.e., a one to many extent
    function. In \cref{fig:multi-valued}, a representative direction
    in which the extent function is multivalued is indicated by an
    arrow. Point O is the origin from which extents are measured.}
\end{figure}

\subsection{Solution}

We repose the problem of $n$-dimensional integration for volume in
spherical coordinates, as an estimation of the relative weights for
the volumes of spheres of varying radii that add up to the volume of
the given body. This approach allows a simple statistical estimation
of the volume of even arbitrary non-convex bodies and requires no
lower bounds on the smoothness of the body. The two dimensional
illustration in \cref{fig:2d-motivation} serves as a simple example.

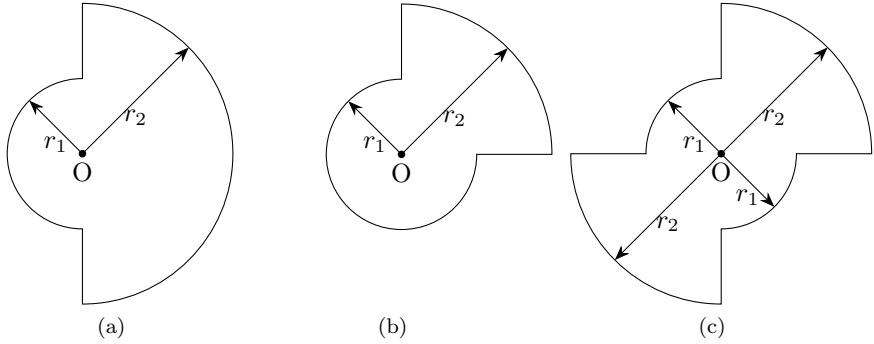
\begin{figure}
  \centering
  \subfloat[] {
    \label{fig:two-equal-sectors}
    \begin{tikzpicture}
      \draw (0,-2) arc [start angle=-90, delta angle=180, radius=2]
      -- ++(0,-1)
      arc [start angle=90, delta angle=180, radius=1]
      -- cycle;
      \draw [-{Stealth[length=2mm]}] (0,0) -- node[below] {$r_1$} (135:1);
      \draw [-{Stealth[length=2mm]}] (0,0) -- node[below] {$r_2$} (45:2);
      \fill [black] (0,0) circle [radius=0.05];
      \node [below] at (0,0) {O};
    \end{tikzpicture}
  }
  \subfloat[] {
    \label{fig:two-unequal-sectors}
    \begin{tikzpicture}
      \draw (2,0) arc [start angle=0, delta angle=90, radius=2]
      -- ++(0,-1)
      arc [start angle=90, delta angle=270, radius=1]
      -- cycle;
      \draw [-{Stealth[length=2mm]}] (0,0) -- node[below] {$r_1$} (135:1);
      \draw [-{Stealth[length=2mm]}] (0,0) -- node[below] {$r_2$} (45:2);
      \path (-2,0) -- (2,0);
      \path (0,-2) -- (0,2);
      \fill [black] (0,0) circle [radius=0.05];
      \node [below] at (0,0) {O};
    \end{tikzpicture}
  }
  \subfloat[] {
    \label{fig:four-equal-sectors}
    \begin{tikzpicture}
      \draw (2,0) arc [start angle=0, delta angle=90, radius=2]
      -- ++(0,-1)
      arc [start angle=90, delta angle=90, radius=1]
      -- ++(-1,0)
      arc [start angle=180, delta angle=90, radius=2]
      -- ++(0,1)
      arc [start angle=270, delta angle=90, radius=1]
      -- cycle;
      \draw [-{Stealth[length=2mm]}] (0,0) -- node[below] {$r_1$} (135:1);
      \draw [-{Stealth[length=2mm]}] (0,0) -- node[below] {$r_1$} (-45:1);
      \draw [-{Stealth[length=2mm]}] (0,0) -- node[below] {$r_2$} (45:2);
      \draw [-{Stealth[length=2mm]}] (0,0) -- node[below] {$r_2$} (-135:2);
      \fill [black] (0,0) circle [radius=0.05];
      \node [below] at (0,0) {O};
    \end{tikzpicture}
  }
  \caption{Consider the 2 dimensional body in
    \cref{fig:two-equal-sectors} consisting of two semicircles of
    radius $r_1$ and $r_2$ attached to each other. The 2-volume (or
    area) of this composite body is
    $\pi(\frac{1}{2} r_1^2 + \frac{1}{2} r_2^2)$. Likewise, the
    2-volume of the body in \cref{fig:two-unequal-sectors} is
    $\pi(\frac{3}{4} r_1^2 + \frac{1}{4} r_2^2)$. We may observe that
    given extents R in all directions, the 2-volume of an arbitrary
    body is simply the mean of $R^2$ with a multiplying front constant
    $\pi$; this front constant depends on the dimension of space. Note
    that the angular sectors with identical radii need not be
    contiguous, and even the body of \cref{fig:four-equal-sectors} has
    the same 2-volume as that of \cref{fig:two-equal-sectors}.}
  \label{fig:2d-motivation}
\end{figure}

The volume of a body in spherical coordinates, with $\rho$ being the
radial coordinate and $\dif \hat{s}$ being the surface element of the
unit sphere, is
\begin{equation} \label{eq:volume-in-spherical-coordinates}
  V = \oint_{\sphereset}\int_0^{r=S(\hat{s})} \rho^{n-1}\dif \rho \dif \hat{s} = \frac{1}{n} \oint_{\sphereset} r^n \dif \hat{s}
\end{equation}
While the above form is convenient for analytic integration when $S$
is tractable and known, it is best avoided otherwise. But this form is
well suited for a statistical estimation by uniform sampling on the
surface of the unit sphere as given below.

If $R$ is a random variable representing the extent obtained when
sampling direction vectors uniformly distributed on $\sphereset$,
\cref{eq:volume-in-spherical-coordinates} can be rewritten using the
expectation of $R^n$ in terms of the extent density $f_R$ of the body.
\begin{equation}
  V = \cbr{\frac{1}{n} \oint_{\sphereset} \dif \hat{s}} \cbr{\int_0^\infty r^n f_R(r) \dif r}
\end{equation}
Expressing in terms of the surface area $s_n$ and volume $v_n$ of the
unit sphere,
\begin{equation}
  \label{eq:algorithm-equation}
  V = \frac{s_n}{n}\int_0^\infty r^n f_R(r) \dif r = v_n \E{R^n}
\end{equation}

For the purpose of volume estimation, classifying bodies based on
their extent densities is more convenient especially for non-convex
and non-symmetric bodies.

If the extent density $f_R$ is known, one can integrate
\cref{eq:algorithm-equation} using a numerical quadrature, and this
scales only as $\bigo{n}$ in the total computing effort. But in
practice, for an unknown body, the estimation and the integration of
the extent density are implemented as a single algorithm represented
by \cref{eq:algorithm-equation} and shown in \cref{alg:volume}.

\begin{algorithm}
  \caption{Estimate volume}
  \begin{algorithmic}
    \Procedure{Estimate volume}{$S$}
    \State $V \gets 0$
    \For{$i = 1:N$}
    \State $\hat{s_i} \gets \text{unit vector in random direction}$
    \State $R_i \gets S(\hat{s_i})$
    \State $V \gets V + R_i^n$
    \EndFor
    \State $V \gets \frac{v_n}{N} V$
    \State \Return $V$
    \EndProcedure
  \end{algorithmic}
  \label{alg:volume}
\end{algorithm}

There are two significant advantages to this statistical estimation.
\begin{enumerate}
\item For a body with a given extent density, the number of random
  samples required for the convergence of the $n^{th}$ moment of the
  extent density, i.e., the $n$-dimensional volume of the body, has an
  upper-bound that varies as $\bigo{n}$. This is proved in
  \cref{sn:analysis}.
\item The independence of the random samples is maintained, and hence
  it is suitable for parallel computing approaches.
\end{enumerate}

The simplest extent density is $f_R(r) = \delta(r-r_0)$ for a sphere
of radius $r_0$. Some convex bodies, such as the cube, are well
defined by their symmetries for all dimensions, while their extent
densities change with dimension. Conversely, different bodies,
including their different orientations, can result in the same extent
density. Different reference points or origins can result in different extent densities for the same body, and thus
affect the convergence weakly but not the order of convergence with
$n$. Also, note that iterating the point of reference to the nominal centre of the body requires only $\bigo{n}$ extent samples, in any case. Further analysis of this algorithm and numerical results for
demonstration follow in the later sections.

\section{Analysis}
\label{sn:analysis}

Approximating the expectation in \cref{eq:algorithm-equation} using a
Monte Carlo estimate $V_N$ of $N$ samples,
\begin{equation}
  V_N = \frac{v_n}{N} \sum_{i=1}^N R_i^n
\end{equation}
The expected root-mean-square (RMS) error of this estimate can then be
written as
\begin{equation}
  \varepsilon = \sqrt{\var{V_N}} = v_n \sqrt{\var{R^n}} \frac{1}{\sqrt{N}}
\end{equation}
We normalize this RMS error with the true volume from
\cref{eq:algorithm-equation} to obtain the relative error $\error$.
\begin{equation}
  \label{eq:normalized-error}
  \error = \frac{\varepsilon}{V} = \frac{\sqrt{\var{R^n}}}{\E{R^n}}
\frac{1}{\sqrt{N}}
\end{equation}

We then pose the analysis of the relative error as derivation of a
bounds for the variance-to-square-mean ratio of the $n^{th}$ moment of
a random variable in a Hausdorff moment problem. Using the above, we
establish the scaling of the number of samples $N$ for any given
relative RMS error $\error$ in terms of the number of dimensions $n$
in the volume estimation.

\subsection{Scale invariance of relative error in the volume estimate}
\label{ssn:scale-invariance-of-relative-error}

Suppose extents of an $n$-dimensional body were scaled by a factor
$a$,
\begin{equation}
  \error = \frac{\sqrt{\var{(aR)^n}}}{\E{(aR)^n}} \frac{1}{\sqrt{N}} =
  \frac{a^n \sqrt{\var{R^n}}}{a^n \E{R^n}} \frac{1}{\sqrt{N}} =
\frac{\sqrt{\var{R^n}}}{\E{R^n}} \frac{1}{\sqrt{N}}
\end{equation}
Thus, relative error in the volume estimate is invariant under a
scaling of the body. Without any loss of generality of our analysis,
it is sufficient to only consider bodies with extents ranging from
$\frac{1}{\lambda}$ to $1$, where $\lambda > 1$ represents the ratio
of the largest to the smallest extent of the body. Hence, in our
analysis, we only consider extent densities with compact support
$\intcc{\frac{1}{\lambda},1}$. Likewise, the convergence of the
algorithm itself is not affected by the scale of the body; only the
distribution of relative extents matters.

\subsection{Scaling of relative error with dimension}

Given that the extent density of interest has been reduced to a
compact support $\intcc{\frac{1}{\lambda},1}$, we have the following
theorems on moments of $R$ and their variance-to-square-mean ratio. We
consider boundaries given by a continuous extent function $S$, where
the extent density is also continuous, bounded and greater than zero
in the interval $\intcc{\frac{1}{\lambda},1}$. If the boundary is
defined by a function $S$ that is not continuous, all the possible
relative extents in $\intcc{\frac{1}{\lambda},1}$ need not exist and
the extent density can indeed be discontinuous or zero at points
within the interval. The following theorems nevertheless apply to such
extent densities in a piece-wise manner with rescaling, thus we incur
no loss of generality in the bodies considered.

\begin{lemma}
  \label{th:bounds-on-moments}
  If $X$ is a random variable whose probability density function $f$
  is supported on $\intcc{\frac{1}{\lambda},1}$ where
  $\lambda \in \intoo{1, \infty}$, and $f$ is bounded as
  $f_{max} \geq f(x) \geq f_{min} > 0$ for all
  $x \in \intcc{\frac{1}{\lambda}, 1}$, then for all
  $k \in \mathbb{N}$
  \[ \frac{f_{max}}{k+1} \del{1-\frac{1}{\lambda^{k+1}}} \ge \E{X^k} \ge \frac{f_{min}}{k+1} \del{1-\frac{1}{\lambda^{k+1}}} \]
\end{lemma}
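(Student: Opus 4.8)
The plan is to work directly from the integral representation of the moment. Since the density $f$ is supported on $\intcc{\frac{1}{\lambda},1}$, I would write $\E{X^k} = \int_{1/\lambda}^1 x^k f(x)\,\dif x$ and then exploit the pointwise bounds $f_{min} \le f(x) \le f_{max}$ that hold throughout the support. The essential observation is that, because $\lambda \in \intoo{1,\infty}$, the entire support lies in the positive reals, so the factor $x^k$ is nonnegative on $\intcc{\frac{1}{\lambda},1}$. Multiplying the density bounds by this nonnegative factor therefore preserves the inequalities, giving $f_{min}\, x^k \le x^k f(x) \le f_{max}\, x^k$ pointwise.

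Next I would integrate this chain of inequalities over $\intcc{\frac{1}{\lambda},1}$ and invoke monotonicity of the integral to obtain $f_{min}\int_{1/\lambda}^1 x^k\,\dif x \le \E{X^k} \le f_{max}\int_{1/\lambda}^1 x^k\,\dif x$. The remaining work is the routine evaluation of the elementary power integral, $\int_{1/\lambda}^1 x^k\,\dif x = \frac{1}{k+1}\del{1-\frac{1}{\lambda^{k+1}}}$, which substitutes into both ends to yield exactly the claimed two-sided bound.

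There is no real obstacle here; the lemma is a sandwich estimate combined with one explicit antiderivative. The only points requiring a moment of care are (i) confirming that the sign of $x^k$ on the support justifies leaving the inequality directions unchanged, and (ii) noting that the hypothesis $f_{min} > 0$ is what makes the lower bound strictly positive and hence useful in the later variance analysis. I would also observe that the argument needs only $k \in \mathbb{N}$ and does not rely on any continuity of $f$ beyond integrability, so the same estimate transfers verbatim to the piece-wise rescaled densities mentioned in the preceding discussion.
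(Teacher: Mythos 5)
Your proposal is correct and follows exactly the paper's own argument: sandwich $x^k f(x)$ between $f_{min}x^k$ and $f_{max}x^k$ on the support, integrate, and evaluate $\int_{1/\lambda}^{1} x^k\,\dif x = \frac{1}{k+1}\del{1-\frac{1}{\lambda^{k+1}}}$. Your added remarks on the sign of $x^k$ and the role of $f_{min}>0$ are sound but not needed beyond what the paper states.
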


\begin{proof}
  When $f$ is bounded as $f_{max} \geq f(x) \geq f_{min} > 0$ for all
  $x \in \intcc{\frac{1}{\lambda}, 1}$, its moments can be trivially
  bounded by zeroth order approximations as given below.
  \begin{equation}
    \int_\frac{1}{\lambda}^1  x^k f_{max} \dif x \geq \int_\frac{1}{\lambda}^1  x^k f(x) \dif x \geq \int_\frac{1}{\lambda}^1 x^k f_{min} \dif x
  \end{equation}
  resulting in
  \begin{equation}
    \frac{f_{max}}{k+1} \del{1-\frac{1}{\lambda^{k+1}}} \ge \E{X^k} \ge \frac{f_{min}}{k+1} \del{1-\frac{1}{\lambda^{k+1}}}
  \end{equation}
\end{proof}

Using bounds of \cref{th:bounds-on-moments}, we can now establish that
the variance-to-square-mean ratio relating the number of samples and
corresponding error, varies as $\bigo{n}$ for a volume in $n$
dimensions.

\begin{theorem}
  \label{th:upper-bound-on-standard-deviation-to-mean}
  If $X$ is a random variable whose probability density function $f$
  is supported on $\intcc{\frac{1}{\lambda},1}$ where
  $\lambda \in \intoo{1, \infty}$, and $f$ is bounded as
  $f_{max} \geq f(x) \geq f_{min} > 0$ for all
  $x \in \intcc{\frac{1}{\lambda}, 1}$, then, for
  $k \gg \frac{1}{\lambda -1}$ and $k \in \mathbb{N}$, there exists
  some $c \in \mathbb{R}$ such that
  \[ \frac{\sqrt{\var{X^k}}}{\E{X^k}} \leq c \sqrt{k} \]
\end{theorem}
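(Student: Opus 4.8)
The plan is to reduce the bound on the standard-deviation-to-mean ratio to a bound on the ratio of consecutive even/squared moments, and then to estimate that ratio directly using \cref{th:bounds-on-moments}. First I would square the target quantity and use the elementary identity
\[
  \frac{\var{X^k}}{\E{X^k}^2} = \frac{\E{X^{2k}}}{\E{X^k}^2} - 1,
\]
so that it suffices to show $\E{X^{2k}}/\E{X^k}^2 \le c^2 k + 1$ for some constant $c$ and all $k$ in the stated regime. This turns a variance estimate into a purely moment-ratio estimate, which is exactly what \cref{th:bounds-on-moments} controls.

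Next I would bound the numerator from above and the denominator from below by applying \cref{th:bounds-on-moments} with $k$ replaced by $2k$ (upper bound, using $f_{max}$) and by $k$ (lower bound, using $f_{min}$), respectively. This yields
\[
  \frac{\E{X^{2k}}}{\E{X^k}^2} \le \frac{f_{max}}{f_{min}^2} \cdot \frac{(k+1)^2}{2k+1} \cdot \frac{1 - \lambda^{-(2k+1)}}{\del{1 - \lambda^{-(k+1)}}^2}.
\]
The right-hand side factors cleanly into the constant $f_{max}/f_{min}^2$, a rational factor $(k+1)^2/(2k+1)$ that is manifestly $\bigo{k}$, and a $\lambda$-dependent factor $g_\lambda(k) = \del{1 - \lambda^{-(2k+1)}}/\del{1 - \lambda^{-(k+1)}}^2$.

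The main obstacle is controlling $g_\lambda(k)$ uniformly, and this is precisely where the hypothesis $k \gg \frac{1}{\lambda - 1}$ enters. Writing $\lambda^{-(k+1)} = e^{-(k+1)\ln\lambda}$ and invoking a standard inequality such as $\ln\lambda \ge \frac{\lambda-1}{\lambda}$, the exponent $(k+1)\ln\lambda$ is bounded below by a quantity comparable to $(k+1)(\lambda-1)$, which is large exactly under the stated hypothesis (the delicate case being $\lambda$ near $1$, where $\ln\lambda \approx \lambda-1$). Consequently $\lambda^{-(k+1)}$ can be driven below a fixed threshold (say $\tfrac{1}{2}$), after which both the numerator $1 - \lambda^{-(2k+1)}$ and the denominator $\del{1 - \lambda^{-(k+1)}}^2$ of $g_\lambda(k)$ lie between fixed positive constants, so $g_\lambda(k)$ is bounded by an absolute constant.

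Combining the three factors gives $\E{X^{2k}}/\E{X^k}^2 \le C k$ for a constant $C$ depending only on $f_{max}$, $f_{min}$, and the bound on $g_\lambda$; subtracting $1$ preserves the $\bigo{k}$ estimate, and taking square roots yields $\sqrt{\var{X^k}}/\E{X^k} \le c\sqrt{k}$ with $c = \sqrt{C}$. Applying this with $k = n$ then gives the claimed $\bigo{\sqrt{n}}$ scaling of the relative error per sample, hence the $\bigo{n}$ scaling of $N$ stated in \cref{sn:analysis}.
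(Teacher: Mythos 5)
Your proposal is correct and follows essentially the same route as the paper: the identity $\var{X^k}/\E{X^k}^2 = \E{X^{2k}}/\E{X^k}^2 - 1$, followed by applying \cref{th:bounds-on-moments} with $f_{max}$ at order $2k$ in the numerator and $f_{min}$ at order $k$ in the denominator, arriving at the identical bound. Your explicit control of the $\lambda$-dependent factor via $\ln\lambda \ge \frac{\lambda-1}{\lambda}$ is a welcome detail that the paper's proof leaves implicit when it passes directly from the moment-ratio bound to the conclusion for $k \gg \frac{1}{\lambda-1}$.
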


\begin{proof}
  \begin{equation}
    \frac{\var{X^k}}{\cbr{\E{X^k}}^2} = \frac{\E{X^{2k}} - \cbr{\E{X^k}}^2}{\cbr{\E{X^k}}^2}
  \end{equation}
  \begin{equation}
    \label{eq:variance-to-square-mean-ratio}
    \frac{\var{X^k}}{\cbr{\E{X^k}}^2} = \frac{\E{X^{2k}}}{\del{\E{X^k}}^2} - 1
  \end{equation}
  The two bounds on moments in \cref{th:bounds-on-moments} applied to
  maximize the above ratio, gives us
  \begin{equation}
    \frac{\var{X^k}}{\cbr{\E{X^k}}^2} \leq \frac{f_{max}}{f_{min}^2} \frac{(k+1)^2}{2k+1} \frac{(1-\frac{1}{\lambda^{2k+1}})}{(1-\frac{1}{\lambda^{k+1}})^2} - 1
  \end{equation}
  and further for all $k \gg \frac{1}{\lambda-1}$,
  \begin{equation}
    \frac{\sqrt{\var{X^k}}}{\E{X^k}} \leq c \sqrt{k}
  \end{equation}
  where \[ c \approx \frac{\sqrt{f_{max}}}{f_{min}} \]
\end{proof}

In the proposition below, we use the variance-to-square-mean ratio of
the $k^{\text{th}}$ moment varying as $\bigo{k}$, to derive the
expected number of samples for a given error.
\begin{proposition}
  \label{th:scaling-relative-error-with-n}
  For a given extent density and a relative RMS error $\error$ in the
  volume estimate $V_N$, the required number of samples $N$ increases
  linearly with dimension $n$.
\end{proposition}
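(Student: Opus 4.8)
The plan is to identify the moment order $k$ in \cref{th:upper-bound-on-standard-deviation-to-mean} with the dimension $n$, since the volume estimate depends on the $n$th moment $\E{R^n}$, and then simply invert the relative-error expression to solve for $N$. Starting from \cref{eq:normalized-error} with the extent $R$ playing the role of $X$ and $k = n$, I would square both sides and rearrange to obtain
\[
  N = \frac{1}{\error^2} \frac{\var{R^n}}{\del{\E{R^n}}^2}.
\]
This isolates the dependence of the sample count on the variance-to-square-mean ratio of the $n$th moment, which is exactly the quantity bounded in the preceding theorem.

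Next I would invoke \cref{th:upper-bound-on-standard-deviation-to-mean} to replace the ratio by its upper bound. For a fixed extent density, the parameters $\lambda$, $f_{max}$, and $f_{min}$ are fixed, so the constant $c \approx \sqrt{f_{max}}/f_{min}$ is independent of $n$; moreover, the hypothesis $k \gg 1/(\lambda - 1)$ is satisfied for all sufficiently large $n$ once $\lambda$ is fixed. Substituting $\sqrt{\var{R^n}}/\E{R^n} \le c\sqrt{n}$ gives
\[
  N \le \frac{c^2 n}{\error^2},
\]
which shows that, for a target relative error $\error$ held constant, $N = \bigo{n}$ --- i.e.\ the required number of samples grows at most linearly with dimension.

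I expect the main point requiring care is not the algebra, which is routine, but the justification that the bounding constant $c$ is genuinely independent of $n$. The phrase \emph{for a given extent density} in the statement is doing real work here: it fixes the support ratio $\lambda$ and the density bounds $f_{max}, f_{min}$ once and for all, so that as $n$ grows the body is not allowed to change shape. Under that reading the linear scaling follows immediately; I would make this dependence explicit and note that a body whose extent density degenerates with $n$ (for instance an eccentric convex shape with a tailed extent distribution, as remarked in the introduction) can violate the premise and spoil the $\bigo{n}$ scaling.
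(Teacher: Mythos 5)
Your proposal is correct and takes essentially the same route as the paper: both substitute the bound $\sqrt{\var{R^n}}/\E{R^n} \le c\sqrt{n}$ from \cref{th:upper-bound-on-standard-deviation-to-mean} into \cref{eq:normalized-error} and rearrange to obtain $N \le \frac{f_{max}}{f_{min}^2}\frac{n}{\error^2}$. Your additional remarks --- that ``for a given extent density'' fixes $\lambda$, $f_{max}$, $f_{min}$ independently of $n$, and that the hypothesis $k \gg \frac{1}{\lambda-1}$ holds for all sufficiently large $n$ --- make explicit assumptions the paper leaves implicit, and are worth stating.
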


\begin{proof}
  Applying the bound on the variance-to-square-mean ratio in
  \cref{th:upper-bound-on-standard-deviation-to-mean} into
  \cref{eq:normalized-error} for relative RMS error, gives us the
  following.
  \begin{equation}
    \error \le \frac{\sqrt{f_{max}}}{f_{min}} \frac{\sqrt{n}}{\sqrt{N}}
  \end{equation}
  \begin{equation}
    N \le \frac{f_{max}}{f_{min}^2} \frac{n}{\error^2}
  \end{equation}
\end{proof}

It can be shown that the number of samples required for a relative RMS
error $\error$ is significantly smaller than this upper bound for
probability density functions supported on $\intcc{0,1}$ that do not
have a tail along large extents. Exact relations for moments of a few
distributions are shown in the following corollary, and other demonstrations are shown in \cref{sec:numerical-results}. For extent
densities $f_R$ that are tailed along large extents, an importance
sampling can limit the number of samples to a reasonable value.

\begin{corollary}
  \label{th:corollary-on-moments}
  We present a few distributions where exact analytical relations for
  the variance-to-square-mean ratio of the $k^{\text{th}}$ moment can
  be derived.
\end{corollary}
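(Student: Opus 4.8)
The plan is to establish the corollary purely by direct computation: for each of a handful of named extent densities supported on $\intcc{0,1}$, I would evaluate the raw moments $\E{X^k}$ and $\E{X^{2k}}$ in closed form and substitute them into \cref{eq:variance-to-square-mean-ratio} to obtain an exact expression for the variance-to-square-mean ratio as a function of $k$. Since the corollary claims only that such exact relations \emph{exist} for these distributions, the work is entirely explicit and the ``proof'' consists of exhibiting the formulas together with their leading-order behaviour in $k$.

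I would begin with the \uniformdist density, the simplest tail-free case, where $\E{X^k} = \frac{1}{k+1}$ and $\E{X^{2k}} = \frac{1}{2k+1}$. Substituting into \cref{eq:variance-to-square-mean-ratio} gives
\begin{equation}
  \frac{\var{X^k}}{\cbr{\E{X^k}}^2} = \frac{(k+1)^2}{2k+1} - 1,
\end{equation}
so the ratio grows like $\frac{k}{2}$ and the standard-deviation-to-mean ratio $\sqrt{\var{X^k}}/\E{X^k}$ like $\sqrt{k/2}$, confirming the $\bigo{\sqrt{k}}$ scaling of \cref{th:upper-bound-on-standard-deviation-to-mean}.

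For the \betageneraldist family I would use the standard moment formula $\E{X^k} = \frac{\Gamma(\alpha+k)\,\Gamma(\alpha+\beta)}{\Gamma(\alpha)\,\Gamma(\alpha+\beta+k)}$, form the analogous expression for $\E{X^{2k}}$, and reduce the ratio to a product of Gamma-function quotients; specialising to \betadist then yields a rational function of $k$. The main computational care lies in simplifying these Gamma quotients into a transparent closed form and extracting the leading $\bigo{k}$ term, which again matches the general bound.

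The real subtlety --- and the point worth emphasising --- is that every one of these densities is supported down to the origin, i.e.\ corresponds to $\lambda \to \infty$, so none of them strictly satisfies the hypothesis $f_{min} > 0$ on $\intcc{\frac{1}{\lambda},1}$ used in \cref{th:bounds-on-moments,th:upper-bound-on-standard-deviation-to-mean}. The explicit computations therefore serve to verify directly that the $\bigo{k}$ scaling of the variance-to-square-mean ratio survives even when the support touches zero, provided the density carries no tail toward the large-extent end near $1$. This is exactly the regime the preceding discussion flagged as favourable, so the anticipated obstacle is not any single hard estimate but rather presenting the family of exact results coherently enough to make this qualitative conclusion evident.
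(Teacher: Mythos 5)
Your proposal is correct and follows essentially the same route as the paper: a direct closed-form computation of $\E{X^k}$ and $\E{X^{2k}}$ substituted into \cref{eq:variance-to-square-mean-ratio}, with your uniform case reproducing the paper's exact formula $\sqrt{\var{X^k}}/\E{X^k}=k/\sqrt{2k+1}$ and your general beta family subsuming the paper's polynomial density $(m+1)x^m$ as the $\mathrm{beta}(m+1,1)$ case. The only cosmetic difference is that the paper's third example is the U-quadratic density $12\del{x-\tfrac{1}{2}}^2$ rather than $\mathrm{beta}(2,2)$, which is immaterial since the corollary only asks to exhibit a few distributions admitting exact relations.
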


\begin{proof}
  For the uniform distribution on the interval $\intcc{0,1}$,
  \begin{equation}
    f(x) =
    \begin{cases}
      1 & x \in \intcc{0, 1} \\
      0 & \text{elsewhere}
    \end{cases}
  \end{equation}
  \begin{equation}
    \frac{\sqrt{\var{X^k}}}{\E{X^k}} = \frac{k}{\sqrt{2k+1}}
  \end{equation}

  For the distribution with the polynomial probability density
  function given by the following with $m \ne -1$.
  \begin{equation}
    \label{eq:polynomial-distribution-pdf}
    f(x) =
    \begin{cases}
      (m + 1) x^m & x \in \intcc{0, 1} \\
      0 & \text{otherwise}
    \end{cases}
  \end{equation}
  \begin{equation}
    \frac{\sqrt{\var{X^k}}}{\E{X^k}} = \frac{k}{\sqrt{(m+1)(m+2k+1)}}
  \end{equation}

  For a U-quadratic distribution with a probability density function
  given by
  \begin{equation}
    \label{eq:u-quadratic-distribution-pdf}
    f(x) =
    \begin{cases}
      12\del{x - \frac{1}{2}}^2 & x \in \intcc{0, 1} \\
      0 & \text{otherwise}
    \end{cases}
  \end{equation}
  \begin{equation}
    \frac{\sqrt{\var{X^k}}}{\E{X^k}} = \sqrt{\frac{(2k^2 + k + 1)(k +
        1)(k + 2)^2(k + 3)^2}{3(2k + 1)(2k + 3)(k^2 + k + 2)^2} - 1}
  \end{equation}
  For large $k$,
  \begin{equation}
    \frac{\sqrt{\var{X^k}}}{\E{X^k}} \approx \sqrt{\frac{k}{6}}
  \end{equation}
\end{proof}

\section{Estimation of arbitrary integrals}

In this section we extend the proposed algorithm to estimate arbitrary
integrals.

\subsection{Problem statement}

Given a function $h$ defined over an arbitrary domain specified by an
extent function $S$, estimate the integral of $h$ over the domain.

\subsection{Solution}

The required integral in spherical coordinates, with $\rho$ being the
radial coordinate and $\dif \hat{s}$ being the surface element of the
unit sphere, is
\begin{equation}
  I = \oint_{\sphereset} \int_0^{S(\hat{s})} \rho^{n-1} h(\rho\hat{s}) \dif \rho \dif \hat{s}
\end{equation}
Let $i(\hat{s})$ be the integral along $\rho$ for a given $\hat{s}$.
\begin{equation}
  i(\hat{s}) = \int_0^{S(\hat{s})} \rho^{n-1} h(\rho\hat{s}) \dif \rho
\end{equation}
Then, the integral $I$ over the arbitrary domain is
\begin{equation}
  I = \int_{\sphereset} i(\hat{s}) \dif \hat{s} = s_n \E{i(\hat{s})}
\end{equation}

An algorithm implementing this expectation is shown in
\cref{alg:integral}. Note that any importance sampling applied to
$R^n$ in estimating volumes, can also be extended to $i(\hat{s})$
in the problem of integration over a domain. In this work, we present
results of a hybrid approach to the problem of $n$ dimensional
integration, where one dimensional integration of $i(\hat{s})$ along
the radial direction is performed using deterministic quadrature
schemes such as Gaussian quadrature, while the high dimensional
partial integral over the angular coordinates is estimated
statistically using the naive NSMC approach. Alternative approaches
for integration using the naive NSMC are possible.

\begin{algorithm}
  \caption{Estimate arbitrary integral}
  \begin{algorithmic}
    \Procedure{Estimate arbitrary integral}{$i$}
    \State $I \gets 0$
    \For{$k = 1:N$}
    \State $\hat{s_k} \gets \text{unit vector in random direction}$
    \State $I \gets I + i(\hat{s_k})$
    \EndFor
    \State $I \gets \frac{s_n}{N} I$
    \State \Return $I$
    \EndProcedure
  \end{algorithmic}
  \label{alg:integral}
\end{algorithm}

\section{Examples and demonstrations}
\label{sec:numerical-results}

\newcommand{\volumewindowlength}{1000}

The NSMC algorithm was used to estimate the volumes of bodies
with various extent densities, and to estimate various other
integrals. The relative error between the estimate and the true value
obtained from a known analytical expression was used as a stopping
criterion. The number of samples on $\sphereset$ required for
\volumewindowlength{} consecutive estimates to achieve a relative
tolerance of 0.05, 0.1 and 0.2, is plotted against the dimension of
the problem in
\cref{fig:volume-uniform,fig:volume-beta,fig:volume-arcsine,fig:integral-polynomial,fig:integral-gaussian,fig:integral-x-coordinate}. A
direct comparison with an implementation \cite{lovasz2015src} of a
simulated annealing MCMC method \cite{lovasz2003simulated} to estimate
the volume of certain convex bodies is shown in
\cref{fig:volume-cube,fig:volume-ellipsoid}.

\def\power10typesetter#1{%
  \pgfkeys{/pgf/number format/.cd,sci,retain unit mantissa=false}%
  \pgfmathprintnumber{#1}%
}
\newcommand{\plot}[5]{
  \subfloat[#2]{
    \begin{tikzpicture}[scale=0.9]
      \datavisualization
      [scientific axes,
      x axis = {attribute=dimension, label=Dimension},
      y axis = {attribute=samples, logarithmic, ticks={minor steps
          between steps=9, tick
          typesetter/.code=\power10typesetter{####1}}, label=Average
        number of samples, include value=1000},
      visualize as line/.list = {tol0.2, tol0.1, tol0.05},
      tol0.2 = {style={mark=x}, #3},
      tol0.1 = {style={mark=x}, #4},
      tol0.05 = {style={mark=x}, #5}]
      data [set=tol0.2, read from file=data/#1-0.2.dat,separator=\space]
      data [set=tol0.1, read from file=data/#1-0.1.dat,separator=\space]
      data [set=tol0.05, read from file=data/#1-0.05.dat,separator=\space];
    \end{tikzpicture}
    \label{fig:#1}
  }
}

\begin{figure}
  \centering
  \plot{volume-uniform}{\uniformdist{0}{1}}
  {label in data={text'={$\text{tol}=0.2$}, when=dimension is 60}}
  {label in data={text'={$\text{tol}=0.1$}, when=dimension is 60}}
  {label in data={text={$\text{tol}=0.05$}, when=dimension is 50}}
  \plot{volume-beta}{\betadist{2}{2}}
  {label in data={text'={$\text{tol}=0.2$}, when=dimension is 30}}
  {label in data={text'={$\text{tol}=0.1$}, when=dimension is 40}}
  {label in data={text={$\text{tol}=0.05$}, when=dimension is 50}}

  \plot{volume-arcsine}{Arcsine}
  {label in data={text={$\text{tol}=0.2$}, when=dimension is 50}}
  {label in data={text'={$\text{tol}=0.1$}}}
  {label in data={text'={$\text{tol}=0.05$}, when=dimension is 60}}
  \plot{integral-gaussian}{Gaussian integrand}
  {label in data={text'={$\text{tol}=0.2$}}}
  {label in data={text'={$\text{tol}=0.1$}, when=dimension is 60}}
  {label in data={text'={$\text{tol}=0.05$}, when=dimension is 60}}

  \plot{integral-polynomial}{Polynomial integrand}
  {label in data={text'={$\text{tol}=0.2$}, when=dimension is 50}}
  {label in data={text'={$\text{tol}=0.1$}}}
  {label in data={text'={$\text{tol}=0.05$}, when=dimension is 60}}
  \plot{integral-x-coordinate}{x-coordinate integrand}
  {label in data={text'={$\text{tol}=0.2$}, when=dimension is 60}}
  {label in data={text'={$\text{tol}=0.1$}}}
  {label in data={text={$\text{tol}=0.05$}, when=dimension is 40}}
  \caption[Scaling of the number of samples with dimension to estimate
  volumes and integrals using extent sampling]{The number of samples
    on $\sphereset$ required for \volumewindowlength{} consecutive
    estimates to achieve a relative tolerance (tol) of 0.05, 0.1 and
    0.2, is plotted against the dimension of the problem.
    \cref{fig:volume-uniform,fig:volume-beta,fig:volume-arcsine} show
    respectively the number of samples required to estimate the volume
    of a body with \uniformdist, \betadist and arcsine
    extent
    densities. \cref{fig:integral-gaussian,fig:integral-polynomial,fig:integral-x-coordinate}
    show respectively the number of samples required to estimate the
    integral of the radially symmetric Gaussian integrand of
    \cref{eq:gaussian-integrand}, the radially symmetric oscillatory
    polynomial integrand of \cref{eq:polynomial-integrand} and the
    radially asymmetric x-coordinate integrand of
    \cref{eq:x-coordinate-integrand}, over domains with
    \uniformdist extent density. Note that this domain is highly
    eccentric and can have an arbitrary geometry. The number of
    samples in
    \cref{fig:volume-uniform,fig:volume-arcsine,fig:volume-beta} was
    averaged over \distributiontrials{} trials, and the number of
    samples in
    \cref{fig:integral-polynomial,fig:integral-gaussian,fig:integral-x-coordinate}
    was averaged over \integraltrials{} trials.}
\end{figure}

\subsection{Estimation of the volume represented by extent densities}
\label{sec:volume-distributions-numerical-results}

For estimation of the volumes of bodies represented by various extent
densities, see
\cref{fig:volume-uniform,fig:volume-beta,fig:volume-arcsine}.

\subsubsection{Uniform extent density}

The estimation of the volume of a body with extents uniformly
distributed between 0 and 1 is shown in \cref{fig:volume-uniform}. The
extent density and true volume of a body with extents uniformly
distributed between $a$ and $b$ are
\begin{equation}
  f_R(r) =
  \begin{cases}
    \frac{1}{b-a} & r \in \intcc{a, b} \\
    0 & \text{otherwise}
  \end{cases}
\end{equation}
\begin{equation}
  V = \frac{v_n}{n+1} \sum_{k=0}^n a^k b^{n-k}
\end{equation}

\subsubsection{Beta extent density}

The estimation of the volume of a body with a \betadist{2}{2}
distribution of extents is shown in \cref{fig:volume-beta}. The
probability density of the general \betadist{}{} distribution and the
true volume of a body with extents distributed as the general
\betadist{}{} distribution are
\begin{equation}
  f_R(r) =
  \begin{cases}
    \frac{x^{\alpha-1}(1-x)^{\beta-1}}{B(\alpha,\beta)} & x \in \intcc{0,1} \\
    0 & otherwise
  \end{cases}
\end{equation}
\begin{equation}
  V = v_n \prod_{i=0}^{k-1} \frac{\alpha + i}{\alpha + \beta + i}
\end{equation}

\subsubsection{Arcsine extent density}

The estimation of the volume of a body with an arcsine distribution of
extents is shown in \cref{fig:volume-arcsine}. The arcsine
distribution is a special case of the beta distribution with
$\alpha = \beta = \frac{1}{2}$.

\subsection{Estimation of the volume of convex bodies}
\label{sec:volume-bodies-numerical-results}

For estimation of the volumes of various convex bodies, see
\cref{fig:volume-cube,fig:volume-ellipsoid}. We compare against an
implementation \cite{lovasz2015src} of the simulated annealing MCMC
method \cite{lovasz2003simulated}. While we compare the number of
samples in each method, note that the cost of each sample in simulated
annealing MCMC is much higher than the cost of each sample in NSMC. Iterating the point of reference to the nominal centre of the body, using $\sim n$ pairs of extents of a body in  directions $\hat{s}$ and $-\hat{s}$ is relatively trivial in NSMC, and for bodies of higher reflection symmetries this convergence is faster. The presented results average over varying origins uniformly distributed in a sphere co-centred with the convex body.

Since the extent density of a convex shape is a function of dimension $n$, the expected samples required by NSMC need not a
monotonic function of $n$.  For example, in the case of cube, there is a reduction of the required number of the extent samples by a factor $\sim 2^n$ due to its symmetry, but the $n^{th}$ moment of the
extent density increases approximately as $(\frac{\sqrt{n}}{2})^n$. This
results in very favorable comparison of the naive NSMC with MCMC up to
moderate values of $n$, and this advantage over MCMC is lost as $n$ becomes
larger than 100 where the extent density becomes tailed.

\newcommand{\mcmccomparisonplot}[4]{
  \subfloat[#2]{
    \begin{tikzpicture}[scale=0.9]
      \datavisualization
      [scientific axes,
      x axis = {attribute=dimension, label=Dimension},
      y axis = {attribute=samples, logarithmic, ticks={minor steps
          between steps=9, tick
          typesetter/.code=\power10typesetter{####1}}, label=Number of
      samples},
      visualize as line/.list = {nsmc, mcmc},
      nsmc = {style={mark=x}, #3},
      mcmc = {style={mark=x}, #4}]
      data [set=nsmc, read from
      file=data/volume-offcenter-#1.dat,separator=\space]
      data [set=mcmc, read from
      file=data/mcmc-#1.dat,separator=\space];
    \end{tikzpicture}
    \label{fig:volume-#1}
  }
}

\begin{figure}
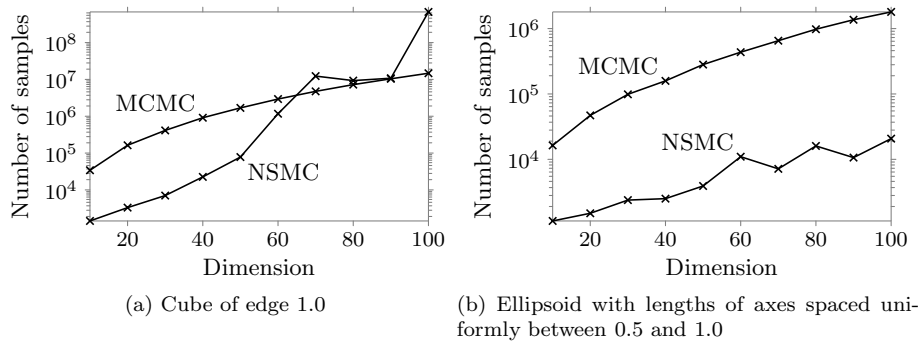

  \centering
  \mcmccomparisonplot{cube}{Cube of edge \cubeedge{}}
  {label in data={text'={NSMC}, when=dimension is 50}}
  {label in data={text={MCMC}, when=dimension is 40}}
  \mcmccomparisonplot{ellipsoid}{Ellipsoid with lengths of axes spaced uniformly
    between \ellipsoidaxesstart{} and \ellipsoidaxesstop{}}
  {label in data={text={NSMC}, when=dimension is 60}}
  {label in data={text={MCMC}, when=dimension is 40}}
  \caption[Scaling of the number of samples with dimension to estimate
  volumes of convex bodies extent sampling]{The number of samples
    required in NSMC and an implementation \cite{lovasz2015src} of
    simulated annealing MCMC \cite{lovasz2003simulated} to estimate
    volume to a relative tolerance of \bodyrtol{}, is plotted against
    the dimension of the problem. For NSMC, the algorithm was stopped
    when \volumewindowlength{} consecutive estimates fell within the
    required relative tolerance of the analytically known true volume.
    \cref{fig:volume-cube,fig:volume-ellipsoid} show respectively the
    number of samples required to estimate the volume of a cube with
    edge \cubeedge{}, and ellipsoid with lengths of axes spaced uniformly between
    \ellipsoidaxesstart{} and \ellipsoidaxesstop{}. NSMC was averaged
    over \offsettrials{} trials, each trial estimating the volume from
    a random center of reference. In \cref{fig:volume-cube}, the
    random center of reference was uniformly distributed in an
    co-centered sphere with a diameter
    \cubeoffsetmaximumpercent{}\% of the edge of the cube. In
    \cref{fig:volume-ellipsoid}, the random center of reference was
    uniformly distributed in an co-centered sphere with a diameter
    \ellipsoidoffsetmaximumpercent{}\% of the longest axis of the
    ellipsoid. While we compare the number of samples in each method,
    note that the cost of each sample in simulated annealing MCMC is
    much higher than the cost of each sample in NSMC.}
\end{figure}

\subsection{Estimation of arbitrary integrals}
\label{sec:integral-estimation-numerical-results}

For estimation of various integrals, see
\cref{fig:integral-polynomial,fig:integral-gaussian,fig:integral-x-coordinate,fig:integral-gaussian}. The
irregular domain chosen has a very large eccentricity with its extents
distributed uniformly between 0 and 1. The Gaussian integrand is in
itself radially symmetric, but note that it is sensitive to any small
errors in sampling such an eccentric domain when $n$ is large. The
next example given by a polynomial includes an additional oscillatory
behavior, but the proposed hybrid approach is robust for such
integrands as well. On the other hand, the example called the
x-coordinate integrand is highly asymmetric radially. The integrals
chosen have exact analytical expressions to confirm convergence for
all dimensions, as they can indeed be reduced to functions of a single
effective variable. Note that in these figures, the number of samples
indicates the number of direction vectors sampled. This does not
include the cost of the deterministic quadrature in evaluating
$i(\hat{s})$ along a direction. The precise cost of this quadrature depends on the
integrand, but note that the cost of evaluating a given scalar function
$h$ increases as \bigo{n} with the number of cardinal directions $n$,
and the number of evaluations of the integrand $\rho^{n-1}h$ required for the quadrature also increase approximately as $n$, making the computing
effort in evaluating $i(\hat{s})$ scale at most as
\bigo{n^2}. The examples demonstrate the \bigo{n} scaling of the number of random
samples required on $\sphereset$ with the dimension $n$ of the non-convex
domain of an arbitrary integral, with the overall computing effort thus
scaling as \bigo{n^3} at most. Some problems of integration where
domains represent a tailed distribution of large extents
with appropriately aligned highly asymmetric integrands, can render the
above approach ineffective. Such special cases require an important
sampling of the partial integral $i(\hat{s})$, and they will be
addressed elsewhere.

\subsubsection{Gaussian integrand}

The estimation of the integral of the following radially symmetric
Gaussian integrand, where $r$ is the radial coordinate, over a domain
with extents distributed uniformly between 0 and 1 is shown in
\cref{fig:integral-gaussian}.
\begin{equation}
  \label{eq:gaussian-integrand}
  h(r) = \exp\del{-\frac{r^2}{2}}
\end{equation}
If $\gamma$ is the lower incomplete gamma function, then the true
partial integral along a direction $\hat{s}$ and the true integral
over a domain with extents uniformly distributed between 0 and $r_0$
are
\begin{equation}
  i(\hat{s}) = 2^{\frac{n}{2} - 1} \gamma\del{\frac{n}{2}, \frac{\cbr{S(\hat{s})}^2}{2}}
\end{equation}
\begin{equation}
  I = \frac{2^{\frac{n-1}{2}}s_n}{r_0}
  \sbr{\frac{r_0}{\sqrt{2}}\gamma\del{\frac{n}{2}, \frac{r_0^2}{2}} - \gamma\del{\frac{n+1}{2}, \frac{r_0^2}{2}}}
\end{equation}

\subsubsection{Polynomial integrand}

The estimation of the integral of the following radially symmetric
oscillatory polynomial integrand, where $r$ is the radial coordinate,
over a domain with extents distributed uniformly between 0 and 1 is
shown in \cref{fig:integral-polynomial}.
\begin{equation}
  \label{eq:polynomial-integrand}
  h(r) = (r - 0.25)(r - 0.50)(r - 0.75) = r^3 - 1.5r^2 + 0.6875r - 0.09375
\end{equation}
For a general polynomial of the form below with $a_k$ as its
coefficients,
\begin{equation}
  h(r) = \sum_{k=0}^{m-1} a_kr^k
\end{equation}
the true partial integral along a direction $\hat{s}$ and the true integral
over a domain with extents uniformly distributed between 0 and $r_0$ are
\begin{equation}
  i(\hat{s}) = \sum_{k=0}^{m-1} \frac{a_k}{n + k} \{S(\hat{s})\}^{n+k}
\end{equation}
\begin{equation}
  I = s_n r_0^n \sum_{k=0}^{m-1} \frac{a_k}{(n+k)(n+k+1)} r_0^k
\end{equation}

\subsubsection{x-coordinate integrand}

The estimation of the integral of the following radially asymmetric
integrand that maps a vector $\vec{x}$ to the absolute value of its
coordinate along the first cardinal direction, over a domain with
extents uniformly distributed between 0 and 1 is shown in
\cref{fig:integral-x-coordinate}. Here, $\hat{x}_1$ is the unit vector
along the first cardinal direction.
\begin{equation}
  \label{eq:x-coordinate-integrand}
  h(\vec{x}) = \envert{\vec{x} \cdot \hat{x}_1}
\end{equation}
The true partial integral along a direction $\hat{s}$ and the true
integral over a domain with extents uniformly distributed between 0
and $r_0$ are
\begin{equation}
  i(\hat{s}) = \frac{\cbr{S(\hat{s})}^{n+1}}{2\pi^2} \frac{s_{n+3}}{s_n}
\end{equation}
\begin{equation}
  I = \frac{s_{n+3}}{2\pi^2(n+2)} r_0^{n+1}
\end{equation}

\appendix
\section{Multi-valued extent function}
\label{sec:multi-valued-extent-function}

In case the extent function $S$ is multi-valued (see
\cref{fig:multi-valued}), the volume of a body, whether it is simply
connected or not, is
\begin{equation}
  V = \oint_{\sphereset} \cbr{\sum_{\text{odd }j}\int_{S^{j-1}(\hat{s})}^{S^j(\hat{s})} \rho^{n-1}\dif \rho} \dif \hat{s}
\end{equation}
with $j = 1, 2, 3, \dotsc$, and the above can again be reduced to the
statistical estimate of the volume as
\begin{equation}
  V = \frac{1}{n} \oint_{\sphereset} r^n \dif \hat{s} =  \cbr{\frac{1}{n} \oint_{\sphereset} \dif \hat{s}} \cbr{\int_0^\infty r^n f_R(r) \dif r}
\end{equation}
with the random extent $R$ now generalized as
\begin{equation}
    R^n = \sum_j (-1)^{j+1} R_j^n
\end{equation}
where $R_1 < R_2 < R_3\dots$ are the random extents representing
multiple values $S^j$ for a given direction $\hat{s}$, and $S^0$=0
always. Note that the largest natural number $j$ representing number
of extents in a given direction, is always odd for a closed body
defined by a bounding surface $S$ around the origin of reference. In
case the origin is outside the closed body, the number of extents is even
valued and this can be treated by a simple negation of signs in the
above equation defining the generalized extents.

\bibliographystyle{siamplain}
\bibliography{references}

\end{document}